\documentclass[12pt]{amsart}
\usepackage{latexsym}
\usepackage{amsfonts}
\usepackage{amssymb}
\usepackage{mathrsfs}

\newtheorem{theorem}{Theorem}[section]

\newtheorem{lemma}[theorem]{Lemma}

\newtheorem{fact}[theorem]{Fact}
\theoremstyle{remark}

\numberwithin{equation}{section}

\newcommand{\ra}{\rightarrow}
\newcommand{\mt}{\mapsto}
\newcommand{\ZZ}{\mathbb{Z}}

\newcommand{\Ad}{\mathrm{Ad}}
\newcommand{\Lp}{\mathfrak{p}}
\newcommand{\Lq}{\mathfrak{q}}
\newcommand{\Ls}{\mathfrak{s}}

\newcommand{\LL}{\mathscr{L}}

\begin{document}

\title{Nonabelian cohomology of compact Lie
groups}

\author{Jinpeng An}
\address{School of mathematical sciences, Peking University,
 Beijing, 100871, China }
\email{anjinpeng@gmail.com}

\author{Ming Liu}
\address{School of mathematical sciences, Peking University,
 Beijing, 100871, China}
\email{mingliulm@yahoo.com.cn}

\author{Zhengdong Wang}
\address{School of mathematical sciences, Peking University,
 Beijing, 100871, China}
\email{zdwang@pku.edu.cn}

\keywords{nonabelian cohomology, compact Lie group, maximal compact
subgroup.}

\subjclass[2000]{20J06; 22E15; 57S15.}

\begin{abstract}
Given a Lie group $G$ with finitely many components and a compact
Lie group $A$ which acts on $G$ by automorphisms, we prove that
there always exists an $A$-invariant maximal compact subgroup $K$ of
$G$, and that for every such $K$, the natural map $H^1(A,K)\ra
H^1(A,G)$ is bijective. This generalizes a classical result of Serre
\cite{Se} and a recent result in \cite{AW}.
\end{abstract}

\maketitle

\section{Introduction}

Let a group $A$ act on a Lie group $G$ by automorphisms. Recall that
the set of cocycles $Z^1(A,G)$ consists of maps $\gamma:A\ra G$
satisfying $\gamma(ab)=\gamma(a)a(\gamma(b))$ for all $a,b\in A$,
and that $\gamma_1,\gamma_2\in Z^1(A,G)$ are cohomologous if for
some $g\in G$ we have $\gamma_2(a)=g^{-1}\gamma_1(a)a(g)$ for all
$a\in A$. The first nonabelian cohomology $H^1(A, G)$ of $A$ with
coefficients in $G$ is, by definition, the set of all cohomologous
classes in $Z^1(A,G)$ (c.f. \cite{Se}).

Because of its relation with number theory, most studies of this
kind of cohomology concentrate on the case that $G$ is also
algebraic. For example, a classical result of Serre
\cite[III.4.5]{Se} asserts that if $G$ is a complex reductive
algebraic group with a maximal compact subgroup $K$, and
$A\cong\ZZ/2\ZZ$ acts on $G$ by the complex conjugation with respect
to $K$, then the natural map $H^1(A,K)\ra H^1(A,G)$ is bijective.
Recently, the case that $G$ is an arbitrary connected Lie group was
considered in \cite{AW,An}. In particular, it was proved that for
any finite group $A$ and any connected Lie group $G$, there exists
an $A$-invariant maximal compact subgroup $K$ of $G$, and the
natural map $H^1(A,K)\ra H^1(A,G)$ is bijective (\cite[Thm.
3.1]{AW}).

The goal of this paper is to generalize the above results to the
case that $A$ is an arbitrary compact Lie group and $G$ has finitely
many components. In this setting, by a cocycle $\gamma:A\ra G$ we
always mean a continuous one. Our main theorem is as follows.

\begin{theorem}\label{T:main}
Let $G$ be a Lie group with finitely many components, and let $A$ be
a compact Lie group which acts on $G$ by automorphisms. Then there
exists an $A$-invariant maximal compact subgroup $K$ of $G$, and for
every such $K$, then natural map $\iota_1:H^1(A,K)\ra H^1(A,G)$ is
bijective.
\end{theorem}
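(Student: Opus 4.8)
The plan is to pass to the semidirect product $\hat G=G\rtimes A$ and reinterpret both cohomology sets as conjugacy classes of subgroups. Since $A$ and $G$ have finitely many components, so does $\hat G$, and a (continuous) cocycle $\gamma\in Z^1(A,G)$ is the same thing as a continuous splitting $\hat\gamma\colon A\ra\hat G$, $a\mt\gamma(a)\,a$, of the projection $\hat G\ra A$; the cocycle identity is exactly the statement that $\hat\gamma$ is a homomorphism. Its image $A_\gamma:=\hat\gamma(A)$ is then a compact complement of $G$ in $\hat G$, and $\gamma_1,\gamma_2$ are cohomologous iff $A_{\gamma_1},A_{\gamma_2}$ are conjugate by an element of $G$. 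For existence I would pick a maximal compact subgroup $\hat K$ of $\hat G$ containing the compact subgroup $A$; then $K:=\hat K\cap G$ is maximal compact in $G$, and it is $A$-invariant because $A\subseteq\hat K$ forces $A$ to normalize both $\hat K$ and $G$. Conversely, for any $A$-invariant maximal compact $K$ one checks directly that $\hat K:=K\rtimes A$ is a maximal compact subgroup of $\hat G$: if $K\rtimes A\subseteq C$ with $C$ compact, then $C\cap G\supseteq K$ is compact, hence equals $K$, and $C\ra A$ is onto with kernel $K$, forcing $C=K\rtimes A$. This lets the argument run for every such $K$ uniformly.

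For surjectivity of $\iota_1$, let $\gamma\in Z^1(A,G)$, so $A_\gamma$ is compact. It lies in some maximal compact subgroup of $\hat G$, and by conjugacy of maximal compact subgroups that subgroup equals $\hat g\,\hat K\,\hat g^{-1}$ for some $\hat g\in\hat G$. Writing $\hat g=g_0a_0$ with $g_0\in G$ and $a_0\in A\subseteq\hat K$, the $A$-factor is absorbed and $\hat g\,\hat K\,\hat g^{-1}=g_0\,\hat K\,g_0^{-1}$, so $g_0^{-1}A_\gamma g_0\subseteq\hat K$ with $g_0\in G$. Since each $a\in A\subseteq\hat K$, this says precisely that the cohomologous cocycle $\gamma'(a)=g_0^{-1}\gamma(a)\,a(g_0)$ takes values in $\hat K\cap G=K$, so $[\gamma]$ lies in the image of $\iota_1$.

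For injectivity I would invoke Serre's twisting formalism to reduce to one statement. Given $\gamma_1\in Z^1(A,K)$, twisting the action by $\gamma_1$ replaces $G$ by the same group with the action $a\ast x=\gamma_1(a)\,a(x)\,\gamma_1(a)^{-1}$, under which $K$ is still an $A$-invariant maximal compact subgroup, and the fiber of $\iota_1$ through $[\gamma_1]$ is identified with the kernel of the twisted $\iota_1$ at the trivial class. Thus injectivity follows once I prove, for every admissible triple $(G,A,K)$, the statement
\begin{equation*}
\text{if }g\in G\text{ satisfies }g^{-1}a(g)\in K\text{ for all }a\in A,\text{ then }g\in G^A\cdot K,
\end{equation*}
since writing $g=m k'$ with $m\in G^A$ and $k'\in K$ gives $g^{-1}a(g)=k'^{-1}a(k')$, exhibiting the $K$-valued $G$-coboundary as a coboundary in $K$.

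To prove this last statement I would use an $A$-equivariant Cartan decomposition of $G$. Choosing an $A$-invariant inner product on $\mathfrak g$ makes $\mathfrak k$ and its orthogonal complement $\Lp$ both $A$-invariant, and Mostow's decomposition gives a diffeomorphism $K\times\Lp\ra G$, $(k,X)\mt k\exp X$. Consequently $G/K$ is identified with $\Lp$ via $X\mt\exp(X)K$, and since $a(\exp X)=\exp(a_\ast X)$ while $A$ fixes the base point $o=eK$, the $A$-action on $G/K\cong\Lp$ is the linear action $a_\ast$ on $\Lp$. The hypothesis $g^{-1}a(g)\in K$ says exactly that $g\cdot o$ is an $A$-fixed point of $G/K$, hence corresponds to a point of the fixed subspace $\Lp^A$, say $g\cdot o=\exp(X_0)\cdot o$ with $X_0\in\Lp^A$. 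Then $\exp(X_0)\in G^A$ because $a_\ast X_0=X_0$, and $\exp(-X_0)g\in K$, so $g\in G^A\cdot K$, as required. The main obstacle is precisely this last ingredient: producing the $A$-equivariant Cartan decomposition for a $G$ that need not be reductive, which rests on Mostow's structure theory for groups with finitely many components. Once that decomposition is in place and compatible with the $A$-action, the fixed-point analysis on $G/K\cong\Lp$ is linear and immediate.
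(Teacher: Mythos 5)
Your existence and surjectivity arguments coincide in substance with the paper's: both pass to $\hat G=G\rtimes A$, observe that $K\rtimes A$ is maximal compact there, and conjugate the compact image of $a\mt(\gamma(a),a)$ into $K\rtimes A$, absorbing the $A$-component of the conjugating element. These parts are correct (the maximality of $\hat K\cap G$ in $G$ does need the small conjugation argument via Fact \ref{F:conjugate}, which you assert rather than prove, but that is routine).

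The genuine gap is in the injectivity step, and it sits exactly where you place "the main obstacle." The decomposition you invoke --- an $\Ad_G(K)$- and $A$-invariant complement $\Lp$ of $\LL(K)$ in $\LL(G)$ such that $(k,X)\mt k\exp X$ is a diffeomorphism $K\times\Lp\ra G$ --- does not exist for a general $G$ with finitely many components; it is a feature of reductive groups, and choosing an $A$-invariant inner product only produces the invariant linear splitting, not the diffeomorphism property. Concretely, let $G=\mathbb{R}^2\rtimes\mathbb{R}$ be the universal cover of the Euclidean motion group, with $t\in\mathbb{R}$ acting by rotation through angle $t$. Here $K=\{e\}$, so your claimed decomposition would force $\exp:\LL(G)\ra G$ to be a diffeomorphism. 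But with basis $e_1,e_2,f$ of $\LL(G)$ satisfying $[f,e_1]=e_2$, $[f,e_2]=-e_1$, one computes $\exp(ae_1+be_2+2\pi f)=(0,2\pi)$ for all $a,b$ (the rotation averages the translation part to zero over a full period), so $\exp$ is neither injective nor surjective: no element $(v,2\pi)$ with $v\ne 0$ is an exponential. This is precisely why the classical theorem of Hochschild--Mostow, and the paper's equivariant version (Lemma \ref{L:Hochschild}), must allow several factors, i.e., a diffeomorphism $K\times\Lp_1\times\cdots\times\Lp_r\ra G$, $(k,X_1,\ldots,X_r)\mt ke^{X_1}\cdots e^{X_r}$, with each $\Lp_i$ invariant under $\Ad_G(K)$ and $A$. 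Producing the $A$-invariant factors is the real technical content of the paper: it is proved by induction on $\dim G$ through the solvable radical and a case analysis on a normal subgroup $D$, and it cannot be obtained from Mostow's abstract diffeomorphism $G\cong K\times\mathbb{R}^n$, since your fixed-point analysis needs the identification of $G/K$ with a vector space on which $A$ acts \emph{linearly}, which is exactly what the exponential form supplies.

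On the positive side, your surrounding strategy is sound and genuinely different from the paper's. Granting Lemma \ref{L:Hochschild}, write each $g\in G$ uniquely as $g=e^{Y_r}\cdots e^{Y_1}k$ (apply the lemma to $g^{-1}$ and invert); then $G/K$ is $A$-equivariantly identified with $\Lp_r\times\cdots\times\Lp_1$ carrying the linear action $da$, an $A$-fixed coset has all $Y_i\in\Lp_i^A$, hence $g\in G^A\cdot K$, and your twisting reduction then yields injectivity. The paper argues instead by a direct computation, comparing the $K$-components of $\gamma_2(a)^{-1}g\gamma_2(a)=\gamma_2(a)^{-1}\gamma_1(a)a(g)$ in the coordinates of $\varphi$, with no twisting; your route is conceptually cleaner but needs the same lemma. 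As written, however, your proof is missing its key ingredient, and the justification you give for it is false.
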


It should be pointed out that the main difficulty in the proof of
Theorem \ref{T:main} lies in the injectivity of the map $\iota_1$.
Recall that the proof of the corresponding part for the special case
of Theorem \ref{T:main} where $A$ is finite (and $G$ is connected),
which is \cite[Thm. 3.1]{AW}, is based on the well-known fact that
if $K$ is a maximal compact subgroup of a Lie group $G$ with
finitely many components, then there exist $\Ad_G(K)$-invariant
subspaces $\Lp_1,\ldots,\Lp_r$ of $\LL(G)$ with
$\LL(G)=\LL(K)\oplus\Lp_1\oplus\cdots\oplus\Lp_r$ such that the map
$K\times\Lp_1\times\cdots\times\Lp_r\rightarrow G$,
$(k,X_1,\ldots,X_r)\mt ke^{X_1}\cdots e^{X_r}$ is a diffeomorphism
(c.f. \cite[Thm. XV.3.1]{Ho}). (Throughout this paper, $\LL$ denotes
the functor which takes a Lie group to its Lie algebra.) To prove
the injectivity of $\iota_1$ in Theorem \ref{T:main}, we need the
following generalization of this fact: If a compact Lie group $A$
acts on $G$ by automorphisms and $K$ is $A$-invariant, then the
subspaces $\Lp_1,\ldots,\Lp_r$ can be chosen to be $A$-invariant
(for the precise statement, c.f. Lemma \ref{L:Hochschild} below). We
will prove this result in Section 2. Theorem \ref{T:main} will be
proved in Section 3.

\section{Some lemmas}

In this section we prove Lemmas \ref{L:invariant} and
\ref{L:Hochschild} below. Lemma \ref{L:invariant} ensures the first
assertion in Theorem \ref{T:main}, and Lemma \ref{L:Hochschild} is a
crucial tool to prove the injectivity of the map $\iota_1$ in
Theorem \ref{T:main}. We need the following well-known fact (c.f.
\cite[Thm. XV.3.1]{Ho} or \cite[Thm. VII.1.2]{Bo}).

\begin{fact}\label{F:conjugate}
Let $G$ be a Lie group with finitely many components, and let $K$ be
a maximal compact subgroup of $G$. Then any compact subgroup of $G$
can be conjugated into $K$ by $G$.
\end{fact}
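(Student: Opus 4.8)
The plan is to reduce the statement to a fixed-point assertion and then invoke a fixed-point theorem for isometric actions of compact groups. First note that the conclusion is equivalent to saying that the left-translation action of a compact subgroup $H\le G$ on the homogeneous space $G/K$ has a fixed point: if $H$ fixes $gK$, then $hgK=gK$ for every $h\in H$, which is exactly $g^{-1}Hg\subseteq K$, that is, $H\subseteq gKg^{-1}$. Thus it suffices to produce an $H$-fixed point in $G/K$. I would obtain it as a canonical ``center'' of an orbit. Fixing a base point $x_0\in G/K$ and a $G$-invariant complete Riemannian metric $d$, consider the $H$-invariant function $f(x)=\int_H d(x,h\cdot x_0)^2\,dh$, where the integral is taken against normalized Haar measure on $H$. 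If $f$ is strictly convex and proper, it has a unique minimizer $p$; since $f$ is $H$-invariant and minimizers are unique, $p$ is fixed by $H$, and we are done.

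For this to work the metric must make $G/K$ a Hadamard manifold, i.e.\ complete, simply connected, and of nonpositive sectional curvature, which guarantees both the strict convexity of $x\mt d(x,q)^2$, and hence of $f$, and the properness needed for a minimizer to exist. The diffeomorphism $K\times\Lp_1\times\cdots\times\Lp_r\ra G$ recalled in the introduction already identifies $G/K$ with the simply connected manifold $\Lp_1\oplus\cdots\oplus\Lp_r$, so only the nonpositive curvature is at issue. When $G$ is reductive this holds for the natural metric, because then $G/K$ is a Riemannian symmetric space of noncompact type (times a flat factor); equivalently one realizes $G$ as a self-adjoint subgroup of some $GL(n,\mathbb{R})$ and transports the nonpositively curved metric of the symmetric space of positive-definite symmetric matrices $GL(n,\mathbb{R})/O(n)$, within which the orbit $G\cdot x_0$ sits as a totally geodesic, hence again nonpositively curved, submanifold.

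The main obstacle is the passage from the reductive case to a general $G$ with finitely many components, where $G/K$ need \emph{not} admit a $G$-invariant metric of nonpositive curvature: already for the Heisenberg group $K$ is trivial and $G/K=G$ carries no left-invariant nonpositively curved metric, although there the statement is vacuous. To handle the general case I would argue by induction on $\dim G$ using the structure theory: split off the solvable radical via the Levi--Mostow decomposition, treat the semisimple quotient by the symmetric-space argument above, and account for the radical and the center separately, descending the fixed point through the associated fibration of $G/K$. This reduction, rather than the fixed-point mechanism itself, is the technical heart of the classical Cartan--Iwasawa--Mostow theorem quoted here.
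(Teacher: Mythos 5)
A point of context first: the paper does not prove this Fact at all --- it is quoted as the classical Cartan--Iwasawa--Mostow conjugacy theorem, with references to Hochschild and Borel. So your attempt has to be judged against those classical proofs, and against the standard of being a complete argument, rather than against a proof in the paper.

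Your fixed-point mechanism is correct as far as it goes: $H$ fixes $gK\in G/K$ exactly when $g^{-1}Hg\subset K$, and on a Hadamard manifold the averaged function $f(x)=\int_H d(x,h\cdot x_0)^2\,dh$ is strictly convex and proper, so its unique minimizer is an $H$-fixed point. This genuinely settles the case of reductive $G$ (symmetric space of noncompact type times a flat factor). But your final paragraph concedes, rather than closes, the real gap: for general $G$ with finitely many components, $G/K$ admits no $G$-invariant nonpositively curved metric, and the promised reduction --- ``split off the radical, treat the semisimple quotient, descend the fixed point through the fibration'' --- is named but never executed. That reduction is not routine. After conjugating the image of $H$ in the semisimple quotient $G/R$ into a maximal compact subgroup $K'$ there, one is left with a compact subgroup of the preimage of $K'$, a solvable-by-compact group, and handling that case requires Iwasawa's splitting theorem (for a closed normal vector subgroup $V$ with compact quotient one has a semidirect product $V\rtimes K$) together with an induction or an averaging/cohomology-vanishing argument. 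This is precisely the content of the proofs in Hochschild and Borel (and it is the same inductive pattern the paper itself follows in proving its Lemma 2.3), so omitting it leaves the proof incomplete by your own admission that it is ``the technical heart.'' A secondary flaw: you invoke the global decomposition $K\times\Lp_1\times\cdots\times\Lp_r\cong G$ to see that $G/K$ is simply connected, but in Hochschild that decomposition is part of the very same theorem (XV.3.1) as the conjugacy statement you are trying to prove, so appealing to it is circular except in the reductive case, where it follows independently from the Cartan decomposition --- which is the only case your argument actually completes.
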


\begin{lemma}\label{L:invariant}
Let $G$ and $A$ be as in Theorem \ref{T:main}, and let $L$ be an
$A$-invariant compact subgroup of $G$. Then there exists an
$A$-invariant maximal compact subgroup $K$ of $G$ which contains
$L$.
\end{lemma}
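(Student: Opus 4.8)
The plan is to realize the $A$-action by inner automorphisms of a larger group and then transport the existence of maximal compact subgroups through Fact~\ref{F:conjugate}. First I would form the semidirect product $\widehat{G}=G\rtimes A$, in which $G$ is a closed normal subgroup and the conjugation action of the subgroup $\{e\}\rtimes A\cong A$ on $G$ reproduces the given action, namely $(e,a)(g,e)(e,a)^{-1}=(a(g),e)$. Since $A$ is a compact Lie group and $G$ has finitely many components, $\widehat{G}$ again has finitely many components, so Fact~\ref{F:conjugate} applies to it. Note that Fact~\ref{F:conjugate} implies that every compact subgroup of $\widehat{G}$ lies in some maximal compact subgroup: conjugate it into a fixed maximal compact subgroup, then conjugate that subgroup back. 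The subgroup $L\rtimes A$ is compact, being homeomorphic to $L\times A$, so there is a maximal compact subgroup $\widehat{K}$ of $\widehat{G}$ with $L\rtimes A\subseteq\widehat{K}$; in particular $A\subseteq\widehat{K}$ and $L\subseteq\widehat{K}$.

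Next I would set $K=\widehat{K}\cap G$. This is a closed subgroup of the compact group $\widehat{K}$, hence compact, and it contains $L$. For $A$-invariance I would argue that, since $A\subseteq\widehat{K}$, conjugation by any element of $A$ preserves both the subgroup $\widehat{K}$ and the normal subgroup $G$, hence preserves their intersection $K$; as this conjugation realizes the $A$-action on $G$, the subgroup $K$ is $A$-invariant.

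The crux is to show that $K$ is a maximal compact subgroup of $G$, and this is where I expect the main work to lie. I would start from an arbitrary maximal compact subgroup $K_0$ of $G$ and view it as a compact subgroup of $\widehat{G}$. By Fact~\ref{F:conjugate} there is $h=(g,a)\in\widehat{G}$ with $hK_0h^{-1}\subseteq\widehat{K}$. Because $G$ is normal in $\widehat{G}$ we still have $hK_0h^{-1}\subseteq G$, so in fact $hK_0h^{-1}\subseteq\widehat{K}\cap G=K$. A direct computation gives $hK_0h^{-1}=g\,a(K_0)\,g^{-1}$ as a subgroup of $G$, and since $a$ is an automorphism of $G$ and $K_0$ is maximal compact, $a(K_0)$, and hence its $G$-conjugate $g\,a(K_0)\,g^{-1}$, is again a maximal compact subgroup of $G$. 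Thus $K$ contains a maximal compact subgroup of $G$; being itself compact, $K$ cannot properly contain one, so $K=g\,a(K_0)\,g^{-1}$ is maximal compact. This completes the construction of the desired $K$. The only genuine obstacle is verifying that the $\widehat{G}$-conjugate landing inside $K$ is a maximal compact subgroup of $G$; once the normality of $G$ and the conjugation formula are in hand, the maximality of $K$ follows at once.
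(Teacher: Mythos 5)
Your proposal is correct and follows essentially the same route as the paper's proof: form the semidirect product $G\rtimes A$, choose a maximal compact subgroup of it containing $L\rtimes A$, intersect it with $G$, and deduce maximality of the intersection from Fact~\ref{F:conjugate} combined with the normality of $G$. The only difference is in the final step, where you conjugate a maximal compact subgroup $K_0$ of $G$ (rather than an arbitrary compact subgroup, as the paper does) into $K$, which makes the concluding maximality argument slightly more explicit but does not change the method.
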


\begin{proof}
Denote $H=G\rtimes A$, and view $G$ and $A$ as subgroups of $H$ in
the natural way. Since $L\rtimes A$ is a compact subgroup of $H$,
there exists a maximal compact subgroup $M$ of $H$ which contains
$L\rtimes A$. Let $K=M\cap G$. It is obvious that the compact group
$K$ contains $L$ and is $A$-invariant. We claim that $K$ is a
maximal compact subgroup of $G$. Indeed, for any compact subgroup
$K'$ of $G$, by Fact \ref{F:conjugate}, there exists $h\in H$ with
$hK'h^{-1}\subset M$. But since $G$ is normal in $H$, we also have
$hK'h^{-1}\subset G$. Thus $hK'h^{-1}\subset M\cap G=K$. This proves
that $K$ is maximal compact in $G$.
\end{proof}

Our proof of the following lemma is motivated by that of \cite[Thm.
XV.3.1]{Ho}.

\begin{lemma}\label{L:Hochschild}
Let $G$ and $A$ be as in Theorem \ref{T:main}, and let $K$ be an
$A$-invariant maximal compact subgroup of $G$. Then there exist
linear subspaces $\Lp_1,\ldots,\Lp_r$ of $\LL(G)$ which are
invariant under both $\Ad_G(K)$ and $A$ such that
$\LL(G)=\LL(K)\oplus\Lp_1\oplus\cdots\oplus\Lp_r$, and such that the
map $\varphi:K\times\Lp_1\times\cdots\times\Lp_r\rightarrow G$
defined by $$\varphi(k,X_1,\ldots,X_r)=ke^{X_1}\cdots e^{X_r}$$ is a
diffeomorphism.
\end{lemma}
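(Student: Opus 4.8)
The plan is to deduce the lemma from the non-equivariant structure theorem \cite[Thm.~XV.3.1]{Ho} applied to the semidirect product $H=G\rtimes A$, in the spirit of the proof of Lemma~\ref{L:invariant}. Since $A$ is a compact Lie group and $G$ has finitely many components, $H$ has finitely many components. First I would verify that $M:=K\rtimes A$ is a maximal compact subgroup of $H$. It is compact; and if $C$ is any compact subgroup of $H$ with $C\supseteq M$, then the projection $\pi\colon H\ra A$ sends $C$ onto $A$, while $C\cap G$ is a compact subgroup of $G$ containing $K$, hence $C\cap G=K$ by maximality of $K$. For $c\in C$, setting $a=\pi(c)\in A\subseteq C$ gives $ca^{-1}\in C\cap G=K$, so $c\in Ka\subseteq M$; thus $C=M$. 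Applying \cite[Thm.~XV.3.1]{Ho} to the pair $(H,M)$ then produces $\Ad_H(M)$-invariant subspaces $\Lq_1,\ldots,\Lq_r$ of $\LL(H)$ with $\LL(H)=\LL(M)\oplus\Lq_1\oplus\cdots\oplus\Lq_r$, such that the map $\psi\colon M\times\Lq_1\times\cdots\times\Lq_r\ra H$, $\psi(m,Y_1,\ldots,Y_r)=me^{Y_1}\cdots e^{Y_r}$, is a diffeomorphism.

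Granting the key point that the $\Lq_i$ can be chosen inside $\LL(G)$, I claim $\Lp_i:=\Lq_i$ satisfy all requirements. Because $G$ is normal in $H$, $\LL(G)$ is an ideal and hence $\Ad_H(M)$-invariant; on $\LL(G)$ one has $\Ad_H(k)=\Ad_G(k)$ for $k\in K$ and $\Ad_H(a)=\LL(a)$ for $a\in A$, the latter being exactly the given $A$-action. Thus each $\Lp_i$ is invariant under both $\Ad_G(K)$ and $A$. Intersecting the decomposition $\LL(H)=\LL(M)\oplus\bigoplus_i\Lq_i$ with the ideal $\LL(G)$ and using $\LL(M)\cap\LL(G)=\LL(K)$ yields $\LL(G)=\LL(K)\oplus\Lp_1\oplus\cdots\oplus\Lp_r$. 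Finally, if $Y_i\in\Lp_i\subseteq\LL(G)$ then $e^{Y_i}\in G$, so for $m=(k,a)$ the element $\psi(m,Y_1,\ldots,Y_r)$ has $A$-component $a$; hence $\psi^{-1}(G)=(K\times\{e\})\times\Lq_1\times\cdots\times\Lq_r$, and the restriction of the diffeomorphism $\psi$ to this closed submanifold is precisely $\varphi$ under the identification $K\cong K\times\{e\}$. Therefore $\varphi$ is a diffeomorphism onto $G$, as desired.

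The main obstacle is exactly the highlighted point: the inclusion $\Lq_i\subseteq\LL(G)$ is not delivered by the bare statement of \cite[Thm.~XV.3.1]{Ho}, and this is where the proof must be ``motivated by'' Hochschild's rather than merely cite it. To secure it I would re-run his construction relative to the $\Ad_H(M)$-invariant ideal $\LL(G)$: at each inductive stage where a complement to some subalgebra is selected, I would use complete reducibility of the compact group $M$ to choose that complement within $\LL(G)$. This should be possible because, $A$ being compact, all of the noncompact directions of $H$ already lie in $G$; making this precise---checking that the canonical subalgebras entering the construction interact correctly with the ideal $\LL(G)$ and that the complements can be kept inside it throughout---is the technical heart of the argument. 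Equivalently, one may dispense with $H$ and adapt Hochschild's proof directly on $G$, carrying along the action of the compact group $K\rtimes A$ via $\Ad_G$ on $K$ and $\LL(\cdot)$ on $A$, and taking all complementary subspaces invariant under it.
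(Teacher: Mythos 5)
The parts you actually prove are correct: $K\rtimes A$ is indeed a maximal compact subgroup of $H=G\rtimes A$ (the same argument appears in the paper's surjectivity proof for Theorem \ref{T:main}), and your verification that subspaces $\Lq_i\subseteq\LL(G)$, invariant under $\Ad_H(M)$ and giving the Hochschild decomposition for $(H,M)$, restrict to the desired data for $(G,K)$ is sound. But this is not a reduction in difficulty, because your ``key point'' is \emph{equivalent} to Lemma \ref{L:Hochschild} itself: on subspaces of $\LL(G)$, invariance under $\Ad_H(M)$ is the same as invariance under $\Ad_G(K)$ and $A$; a complement of $\LL(M)$ in $\LL(H)$ contained in $\LL(G)$ is the same as a complement of $\LL(K)$ in $\LL(G)$; and, since the $\Lq_i$ are $A$-invariant, $(k,a)e^{Y_1}\cdots e^{Y_r}=(ke^{da(Y_1)}\cdots e^{da(Y_r)},a)$, so $\psi$ is a diffeomorphism if and only if $\varphi$ is. Thus everything has been repackaged into the claim you ``grant,'' and you yourself flag its proof as the unexecuted ``technical heart.'' A proposal that defers exactly the hard part is not a proof.

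Moreover, the plan you sketch for that heart---re-run Hochschild's induction on $H$ while keeping all complements inside the ideal $\LL(G)$---meets a concrete obstruction: the canonical subalgebras driving the induction need not lie in $\LL(G)$. For instance, if $A$ is a positive-dimensional torus acting trivially and $G$ is semisimple, the radical of $\LL(H)$ equals $\LL(A)$, which meets $\LL(G)$ trivially, so the very first inductive step takes place transversally to $G$; ``choosing complements inside $\LL(G)$'' is not an available option within that construction. Also, the diffeomorphism property is a global statement that complete reducibility (pure linear algebra) cannot deliver. What the paper does is your closing ``equivalently'' remark carried out in full, as an induction on $\dim G$: (i) construct a canonical, hence $A$-invariant, closed normal abelian subgroup $D$ ($Z(G_0)$ if $G$ is semisimple; otherwise the closure $C$ of the last nonzero derived term of the radical if $C$ is a vector group, or the maximal compact subgroup of $C$ if not); (ii) use Lemma \ref{L:invariant} to get an $A$-invariant maximal compact $K'\supseteq\pi(K)$ of $G'=G/D$; (iii) prove the statement for $(G',K')$ and for $(\pi^{-1}(K'),K)$ via, according to the case, the Cartan decomposition, the induction hypothesis, Iwasawa's splitting $\pi^{-1}(K')=D\rtimes K$, or compactness; and (iv) glue the two decompositions---the only place complete reducibility of $K\rtimes A$ is invoked---and verify the glued map is a diffeomorphism by writing down an explicit smooth inverse. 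None of steps (i)--(iv) appears in your proposal, so it stops exactly where the lemma's difficulty begins.
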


\begin{proof}
We may assume that $G$ is noncompact, and prove the lemma by
induction on $\dim G$. Firstly, if $G$ is not semisimple, we define
a nontrivial $A$-invariant closed connected normal abelian subgroup
$C$ of $G$ as follows. Let $\Ls$ be the solvable radical of
$\LL(G)$, and define $\Ls_i$ inductively as $\Ls_0=\Ls$ and
$\Ls_i=[\Ls_{i-1},\Ls_{i-1}]$. Since $\Ls$ is solvable, there exists
$d$ such that $\Ls_d\ne0$ and $\Ls_{d+1}=0$. Then $\Ls_d$ is
abelian. Let $S_d$ be the connected Lie subgroup of $G$ with Lie
algebra $\Ls_d$. Then we define $C$ as the closure of $S_d$ in $G$.
It is obvious that $C$ satisfies the required properties.

Now we define an $A$-invariant closed normal abelian subgroup $D$ of
$G$ according to the following three cases.
\begin{itemize}
\item[(1)] If $G$ is semisimple, we define $D=Z(G_0)$.
\item[(2)] If $G$ is not semisimple and $C$ is a vector group, we define $D=C$.
\item[(3)] If $G$ is not semisimple and $C$ is not a vector group, we define $D$ as the unique maximal compact subgroup of $C$.
\end{itemize}
Let $G'=G/D$, and let $\pi:G\ra G'$ be the quotient homomorphism.
Then $A$ acts on $G'$ by automorphisms. By Lemma \ref{L:invariant},
we can choose an $A$-invariant maximal compact subgroup $K'$ of $G'$
which contains $\pi(K)$. Let $H=\pi^{-1}(K')$, which is an
$A$-invariant subgroup of $G$. Clearly, $K\subset H$ is a maximal
compact subgroup of $H$.

We first prove the following two claims.

\emph{Claim 1.} Lemma \ref{L:Hochschild} holds for the pair
$(G',K')$.

For case (1), $G'_0$ is semisimple with trivial center. We choose
$\Lp'$ as the orthogonal complement of $\LL(K')$ in $\LL(G')$ with
respect to the Killing form of $\LL(G')$. Since the Killing form is
preserved by any automorphism, we see that $\Lp'$ is invariant under
both $\Ad_{G'}(K')$ and $A$. It is well-known that
$\LL(G')=\LL(K')\oplus\Lp'$, and the Cartan decomposition ensures
that the map $\varphi':K'\times\Lp'\ra G'$,
$\varphi'(k',X')=k'e^{X'}$ is a diffeomorphism (for the case that
$G'$ is non-connected, c.f. \cite[Prop. VII.2.3]{Bo}). For the last
two cases, we have $\dim G'<\dim G$, and Claim 1 follows from the
induction hypothesis. This finishes the verification of Claim 1.

\emph{Claim 2.} Lemma \ref{L:Hochschild} holds for the pair $(H,K)$.

For case (1), it is well-known that $\pi^{-1}(K'_0)$ is connected
(c.f. \cite[Thm. 6.31(e)]{Kn}). So $H$ has finitely many components.
Since $G$ is noncompact, we have $\dim K'<\dim G'$. So $\dim H<\dim
G$, and in this case Claim 2 follows from the induction hypothesis.
For case (2), since $D$ is a closed normal vector subgroup of $H$
and $H/D$ is compact, by a theorem of Iwasawa (c.f. \cite[Thm.
III.2.3 and Lem. XV.3.2]{Ho} or \cite[Thm VII.4.1]{Bo}), we indeed
have $H=D\rtimes K$. If we let $\Lp=\LL(D)$, then the map
$\varphi_0:K\times\Lp\ra H$, $\varphi_0(k,X)=ke^X$ is obviously a
diffeomorphism. This proves Claim 2 for case (2). For case (3),
since $D$ is compact, $H$ is also compact, and there is nothing to
prove.

Now we have a surjective $A$-equivariant homomorphism $\pi:G\ra G'$
with kernel $D$, an $A$-invariant maximal compact subgroup $K'$ of
$G'$ containing $\pi(K)$, subspaces $\Lp'_1,\ldots,\Lp'_m$ of
$\LL(G')$, and subspaces $\Lp_1,\ldots,\Lp_n$ of $\LL(H)$, where
$H=\pi^{-1}(K')$, such that
\begin{itemize}
\item[(1)] $\LL(G')=\LL(K')\oplus\Lp'_1\oplus\cdots\oplus\Lp'_m$ and
$\LL(H)=\LL(K)\oplus\Lp_1\oplus\cdots\oplus\Lp_n$.
\item[(2)] Every $\Lp'_i$ is invariant
under $\Ad_{G'}(K')$ and $A$, and every $\Lp_j$ is invariant under
$\Ad_G(K)$ and $A$.
\item[(3)] The maps
$\varphi':K'\times\Lp'_1\times\cdots\times\Lp'_m\rightarrow G'$ and
$\varphi_0:K\times\Lp_1\times\cdots\times\Lp_n\rightarrow H$ defined
by
\begin{align*}
\varphi'(k',X'_1,\ldots,X'_m)&=k'e^{X'_1}\cdots e^{X'_m},\\
\varphi_0(k,X_1,\ldots,X_n)&=ke^{X_1}\cdots e^{X_n}
\end{align*}
are diffeomorphisms.
\end{itemize}
Note that the compact group $K\rtimes A$ acts linearly on each
$(d\pi)^{-1}(\Lp'_i)$, and the subspace $\LL(D)$ of
$(d\pi)^{-1}(\Lp'_i)$ is invariant under $K\rtimes A$. So there
exists a subspace $\Lq_i$ of $(d\pi)^{-1}(\Lp'_i)$ which is
invariant under $K\rtimes A$ such that
$(d\pi)^{-1}(\Lp'_i)=\LL(D)\oplus\Lq_i$. Now the subspaces $\Lp_j$
and $\Lq_i$ are all invariant under $\Ad_G(K)$ and $A$, and it is
easy to see that
$\LL(G)=\LL(K)\oplus\Lp_1\oplus\cdots\oplus\Lp_n\oplus\Lq_1\oplus\cdots\oplus\Lq_m$.
It remains to prove that the map
$\varphi:K\times\Lp_1\times\cdots\times\Lp_n\times\Lq_1\times\cdots\times\Lq_m\rightarrow
G$ defined by
$$\varphi(k,X_1,\ldots,X_n,Y_1,\ldots,Y_m)=ke^{X_1}\cdots e^{X_n}e^{Y_1}\cdots e^{Y_m}$$ is a
diffeomorphism. Since $\varphi_0$ and $\varphi'$ are
diffeomorphisms, there are smooth maps $\overline{k}:H\ra K$,
$\overline{X_j}:H\ra\Lp_j$, $\overline{k'}:G'\ra K'$, and
$\overline{X'_i}:G'\ra\Lp'_i$ such that
\begin{align*}
h&=\overline{k}(h)e^{\overline{X_1}(h)}\cdots e^{\overline{X_n}(h)},\\
g'&=\overline{k'}(g')e^{\overline{X'_1}(g')}\cdots
e^{\overline{X'_m}(g')}
\end{align*}
for all $h\in H$ and $g'\in G'$. We define smooth maps
$\widetilde{Y_i}:G\ra\Lq_i$, $\widetilde{h}:G\ra H$,
$\widetilde{k}:G\ra K$, $\widetilde{X_j}:G\ra\Lp_j$ as
\begin{align*}
\widetilde{Y_i}&=(d\pi|_{\Lq_i})^{-1}\circ\overline{X'_i}\circ\pi,\\
\widetilde{h}(g)&=g(e^{\widetilde{Y_1}(g)}\cdots e^{\widetilde{Y_m}(g)})^{-1}\in H,\\
\widetilde{k}&=\overline{k}\circ\widetilde{h},\\
\widetilde{X_j}&=\overline{X_j}\circ\widetilde{h}.
\end{align*}
Let
$$\psi=(\widetilde{k},\widetilde{X_1},\ldots,\widetilde{X_n},\widetilde{Y_1},\ldots,\widetilde{Y_m}):G\ra
K\times\Lp_1\times\cdots\times\Lp_n\times\Lq_1\times\cdots\times\Lq_m.$$
Then it is straightforward to check that both $\varphi\circ\psi$ and
$\psi\circ\varphi$ are the identity maps. Thus $\varphi$ is a
diffeomorphism. The proof of Lemma \ref{L:Hochschild} is finished.
\end{proof}

\section{Proof of the main theorem}

Now we prove our main Theorem \ref{T:main}.

\begin{proof}[Proof of Theorem \ref{T:main}]
The first assertion has been proved in Lemma \ref{L:invariant}. Now
we prove the surjectivity of $\iota_1:H^1(A,K)\ra H^1(A,G)$. We
first recall that the group operations in $G\rtimes A$ are defined
as
$$(g,a)(h,b)=(ga(h),ab), \quad (g,a)^{-1}=(a^{-1}(g^{-1}),a^{-1}).$$
We claim that for every $A$-invariant maximal compact subgroup $K$
of $G$, $K\rtimes A$ is a maximal compact subgroup of $G\rtimes A$.
Indeed, if $L$ is a compact subgroup of $G\rtimes A$ containing
$K\rtimes A$, then $L\cap G$ is a compact subgroup of $G$ containing
$K$. This forces $L\cap G=K$. Now if $h=(g,a)\in L$, since $A\subset
L$, we have $g=ha^{-1}\in L\cap G=K$. Hence $h\in K\rtimes A$. This
proves that $K\rtimes A$ is maximal compact in $G\rtimes A$. Now let
$\gamma:A\ra G$ be a cocycle. Then it is easy to check that the map
$\widetilde{\gamma}:A\ra G\rtimes A$ defined as
$\widetilde{\gamma}(a)=(\gamma(a),a)$ is a homomorphism. Since
$\widetilde{\gamma}$ is continuous, we see that
$\widetilde{\gamma}(A)$ is a compact subgroup of $G\rtimes A$. Hence
there exists $(g,b)\in G\rtimes A$ such that
$(g,b)^{-1}\widetilde{\gamma}(A)(g,b)\subset K\rtimes A$. This means
that $(g,b)^{-1}(\gamma(a),a)(g,b)\in K\rtimes A$ for all $a\in A$.
But
$$(g,b)^{-1}(\gamma(a),a)(g,b)=(b^{-1}(g^{-1}\gamma(a)a(g)),b^{-1}ab).$$
So we have $g^{-1}\gamma(a)a(g)\in K$ for all $a\in A$. Hence
$\gamma$ is cohomologous to a cocycle which takes values in $K$.
This proves that $H^1(A,K)\ra H^1(A,G)$ is surjective.

To prove that $\iota_1$ is injective, let $\gamma_1,\gamma_2:A\ra K$
be cocycles which are cohomologous under $G$, i.e., there exists
$g\in G$ with $\gamma_2(a)=g^{-1}\gamma_1(a)a(g)$ for all $a\in A$.
By Lemma \ref{L:Hochschild}, there exist linear subspaces
$\Lp_1,\ldots,\Lp_r$ of $\LL(G)$ which are invariant under
$\Ad_G(K)$ and $A$ such that
$\LL(G)=\LL(K)\oplus\Lp_1\oplus\cdots\oplus\Lp_r$, and such that the
map $\varphi:K\times\Lp_1\times\cdots\times\Lp_r\rightarrow G$
defined by $$\varphi(k,X_1,\ldots,X_r)=ke^{X_1}\cdots e^{X_r}$$ is a
diffeomorphism. Write $g$ as $g=\varphi(k,X_1,\ldots,X_r)$. Then for
any $a\in A$, we compute
\begin{align*}
&\varphi(\gamma_2(a)^{-1}k\gamma_2(a),\Ad(\gamma_2(a)^{-1})(X_1),\ldots,\Ad(\gamma_2(a)^{-1})(X_1))\\
=&\gamma_2(a)^{-1}g\gamma_2(a)\\
=&\gamma_2(a)^{-1}\gamma_1(a)a(g)\\
=&\gamma_2(a)^{-1}\gamma_1(a)a(k)e^{da(X_1)}\cdots e^{da(X_r)}\\
=&\varphi(\gamma_2(a)^{-1}\gamma_1(a)a(k),da(X_1),\ldots,da(X_r)).
\end{align*}
Since $\varphi$ is injective, we get
$$\gamma_2(a)^{-1}k\gamma_2(a)=\gamma_2(a)^{-1}\gamma_1(a)a(k).$$
This means that $\gamma_2(a)=k^{-1}\gamma_1(a)a(k)$. So $\gamma_1$
and $\gamma_2$ are cohomologous under $K$. This proves the
injectivity of $\iota_1$.
\end{proof}

\textbf{Acknowledgment.} The authors were partially supported by
NSFC Grant 10771003.


\begin{thebibliography}{99}

\bibitem{AW} J. An, Z. Wang, \textit{Nonabelian cohomology with coefficients in
Lie groups}, Trans. Amer. Math. Soc., 360 (2008), no. 6, 3019--3040.

\bibitem{An}  J. An, \textit{Twisted Weyl groups of Lie groups and nonabelian cohomology}, Geom. Dedicata, 128 (2007), 167--176.

\bibitem{Bo}  A. Borel, \textit{Semisimple groups and Riemannian symmetric spaces}, Hindustan
Book Agency, New Delhi, 1998.

\bibitem{Ho}  G. Hochschild, \textit{The structure of Lie groups},
Holden-Day, San Francisco, 1965.

\bibitem{Kn}  A. W. Knapp, \textit{Lie groups beyond an introduction},
second edition, Birkh\"{a}user, Boston, 2002.

\bibitem{Se}  J.-P. Serre, \textit{Galois cohomology}, Springer-Verlag, Berlin,
1997.

\end{thebibliography}
\end{document}